\DeclareMathOperator{\aaa}{\bf{a}}
\DeclareMathOperator{\bbb}{\bf{b}}
\DeclareMathOperator{\comp}{c}
\DeclareMathOperator{\inv}{inv}
\DeclareMathOperator{\Kappa}{K}
\DeclareMathOperator{\sgn}{sgn}
\DeclareMathOperator{\Vol}{Vol}
\newcommand{\covered}{<\hspace{-4pt}\cdot\hspace{2pt}}
\newcommand{\covers}{\hspace{2pt}\cdot\hspace{-4pt}>}
\newcommand{\va}{\vec{a}}
\newcommand{\vb}{\vec{b}}
\newtheorem{theorem}{Theorem}[section]
\newtheorem{lemma}[theorem]{Lemma}
\theoremstyle{definition}
\begin{document}

\title[Generalized parking functions and descent numbers]
{Generalized parking functions, descent numbers,
and chain polytopes of ribbon posets}

\author{Denis Chebikin and Alexander Postnikov}

\keywords{Labeled trees, tree inversions, parking functions, descent sets
of permutations, chain polytopes, ribbon posets}

\address{Department of Mathematics, Massachusetts Institute of
  Technology,
Cambridge, MA 02139}

\email{chebikin@gmail.com, apost@math.mit.edu}

\begin{abstract}
We consider the inversion enumerator $I_n(q)$, which
counts labeled trees or, equivalently, parking functions.
This polynomial has a natural extension to generalized parking functions.
Substituting $q=-1$ into this generalized polynomial produces the number
of permutations with a certain descent set. In the classical case,
this result implies the formula $I_n(-1) = E_n$, the number of
alternating permutations.
We give a combinatorial proof of these formulas based on the
involution principle.
We also give a geometric interpretation of these identities in terms
of volumes of
generalized chain polytopes of ribbon posets.  The volume of such a
polytope is given by
a sum over generalized parking functions, which is similar to an expression
for the volume of the parking function polytope of Pitman and Stanley.
%
\end{abstract}

\maketitle 

\section{introduction}\label{intro}

Let $\mathcal{T}_n$ be the set of all trees on vertices labeled
$0$, $1$, $2$, \dots, $n$ rooted at $0$. For $T\in\mathcal{T}_n$, let
$\inv(T)$ be the number of pairs $i>j$ such that $j$ is a descendant of $i$
in $T$. Define the $n$-th \emph{inversion enumerator} to be the polynomial
$$
I_n(q):=\sum_{T\in\mathcal{T}_n} q^{\inv(T)}.
$$
Another way to define this polynomial
is via parking functions. A sequence $(b_1,b_2,\dots,b_n)$ of
positive integers is a \emph{parking function of length $n$}
if for all $1\leq j \leq n$,
at least $j$ of the $b_i$'s do not exceed $j$. A classical bijection of
Kreweras \cite{Kreweras}
establishes a correspondence between trees in $\mathcal{T}_n$ with
$k$ inversions and parking functions of length $n$ whose components add up to
${n+1\choose 2} - k$. Hence we can write
$$
I_n(q) = \sum_{(b_1,\dots,b_n)\in\mathcal{P}_n}
q^{{n+1\choose 2} - b_1-b_2-\dots-b_n},
$$
or
$$
\sum_{(b_1,\dots,b_n)\in\mathcal{P}_n} q^{b_1+b_2+\dots+b_n-n}=
q^{n\choose 2}\cdot I_n(q^{-1}),
$$
where $\mathcal{P}_n$ is the set of all parking functions of length $n$.
Cayley's formula states that $|\mathcal{T}_n|=|\mathcal{P}_n|=(n+1)^{n-1}$,
hence $I_n(1) = (n+1)^{n-1}$.

Here we focus on the formula
\begin{equation}\label{In_minus_one}
I_n(-1) = E_n,
\end{equation}
where $E_n$ is the
$n$-th \emph{Euler number}, most commonly defined as the
number of permutations $\sigma_1\sigma_2\dots\sigma_n$ of $[n]=
\{1,2,\dots,n\}$ such that $\sigma_1 < \sigma_2 > \sigma_3 < \sigma_4 >\dots$,
called \emph{alternating} permutations. This formula can be obtained
by deriving 
a closed form expression for
the generating function
$\sum_{n\geq 0} I_n(q) x^n/n!$ and showing that setting $q=-1$ yields
$\tan x + \sec x = \sum_{n\geq 0} E_n x^n/n!$
(see the paper \cite{Gessel} by Gessel or Exercises
3.3.48--49 in \cite{GouldenJackson}). 
A~direct combinatorial proof was given
by
Pansiot \cite{Pansiot}.
In this paper we give two
other ways to prove this fact, one of which,
presented in Section \ref{parking_functions},
is an involution argument on the set of all
but $E_n$ members of $\mathcal{P}_n$. This involution
is a special case of a more general argument
valid for a broader version of parking functions, which we now describe.

Let $\va=(a_1,a_2,\dots,a_n)$ be a \emph{non-decreasing} sequence
of positive integers.
Let us call
a sequence $(b_1,b_2,\dots,b_n)$ of positive integers an
\emph{$\va$-parking function} if the increasing rearrangement
$b_1' \leq b_2' \leq \dots \leq b_n'$ of this sequence satisfies
$b_i' \leq a_i$ for all $i$. Note that
$(1,2,\ldots,n)$-parking functions
are the regular parking functions of length $n$. 
These $\va$-parking functions are
$(a_1, a_2 - a_1, a_3-a_2,\dots)$-parking functions in the original notation
of Yan \cite{Yan}, but the present definition is consistent with later
literature, such as the paper \cite{KungYan} of Kung and Yan.
Let $\mathcal{P}_{\va}$
be the set of all $\va$-parking functions, and define
\begin{equation}\label{Igamma}
I_{\va}(q) := \sum_{(b_1,\dots,b_n)\in\mathcal{P}_{\va}}
q^{b_1+b_2+\dots+b_n-n}
\end{equation}
(this is the \emph{sum enumerator} studied in \cite{KungYan}).
For a subset $S\subseteq [n-1]$, let $\beta_n(S)$ be the number of permutations
of size $n$ with descent set $S$. 
In Section~\ref{parking_functions}
(Theorem~\ref{Igamma_minus_one_thm}) we prove the following generalization
of (\ref{In_minus_one}):
\begin{equation}\label{Igamma_minus_one}
|I_{\va}(-1)| = \left\{
\begin{array}{ll}
0,& \mbox{if $a_1$ is even;}\\
\beta_n(S),& \mbox{if $a_1$ is odd,}
\end{array}
\right.
\end{equation}
where
\begin{equation}\label{Igamma_S}
S = \Bigl\{i\in [n-1]\ \Bigl|\ a_{i+1}\mbox{ is odd}\Bigr\}.
\end{equation}
Indeed, for $\va = (1,2,\ldots,n)$ we have
$S = \{2,4,6,\dots\}\cap [n-1]$, so that $\beta_n(S)$ counts alternating
permutations of size $n$. The formula (\ref{Igamma_minus_one}) arises in
a more sophisticated algebraic context in the paper \cite{PakPostnikov}
of Pak and Postnikov.

In Section \ref{chain_polytopes} 
we obtain a geometric
interpretation of these results by considering generalized
chain polytopes of ribbon posets. 
Given a subset $S \subseteq \{2,3,\dots,n-1\}$,
define $u_S = u_1u_2\dots u_{n-1}$ to be the monomial in non-commuting
formal variables $\aaa$ and $\bbb$ with $u_i = \aaa$ if $i\notin S$ and
$u_i = \bbb$ if $i\in S$. Let $\comp(S)$ be the
composition
$(1,\delta_1,\delta_2,\dots,\delta_{k-1})$ of $n$,
where the $\delta_i$'s are defined by
$u_S = \aaa^{\delta_1} \bbb^{\delta_2} \aaa^{\delta_3} \bbb^{\delta_4}
\dots$.
For example, for $n=7$ and $S = \{2,3,4\}$ we have
$u_S = \aaa\bbb\bbb\bbb\aaa\aaa = \aaa\bbb^3\aaa^2$, so
$\comp(S) = (1,1,3,2)$. 
Now define the polytope $\mathcal{Z}_S(d_1,d_2,\dots,
d_k)$, where $0<d_1\leq d_2 \leq \cdots \leq d_k$ are real numbers,
to be the set of
all points $(x_1,x_2,\dots,x_n)$ satisfying
the inequalities $x_j \geq 0$ for $j\in [n]$,
$x_1 \leq d_1$, and
$$
x_{\delta_1+\delta_2+\dots+\delta_{i-1}+1}
+ x_{\delta_1+\delta_2+\dots+\delta_{i-1}+2}
+ \dots + x_{\delta_1+\delta_2+\dots+\delta_i + 1} \leq d_{i+1}
$$
for $1\leq i \leq k-1$. Thus to the above example corresponds the polytope
$\mathcal{Z}_S(d_1,d_2,d_3,d_4)$ in $\mathbb{R}_{\geq 0}^7$ defined by
$$
x_1 \leq d_1;
$$
$$
x_1+x_2 \leq d_2;
$$
$$
x_2+x_3+x_4+x_5 \leq d_3;
$$
$$
x_5+x_6+x_7 \leq d_4.
$$
We require $1\notin S$ here to ensure that $\delta_1\neq 0$, but
there is no essential loss of generality because the chain polytope
of the poset $Z_S$ is defined by the same relations as $Z_{[n-1]-S}$.

For a poset $P$ on $n$ elements, the
\emph{chain polytope} $\mathcal{C}(P)$ is the set of points
$(x_1,x_2,\dots,x_n)$ of the unit hypercube $[0,1]^n$ satisfying the
inequalities $x_{p_1} + x_{p_2} + \dots + x_{p_\ell} \leq 1$ for every
chain $p_1 < p_2 < \dots < p_\ell$ in $P$; see \cite{StanleyPosetPolytopes}.
Hence
$\mathcal{Z}_S(1,1,\dots)$ is the chain polytope of the
\emph{ribbon poset} $Z_S$, which is the poset on $\{z_1,z_2,\dots,z_n\}$
generated by the cover relations $z_i \covers z_{i+1}$ if $i\in S$ and
$z_i \covered z_{i+1}$ if $i\notin S$. The volume of $\mathcal{C}(P)$ equals
$1/n!$ times the number of linear extensions of $P$, which in the case
$P = Z_S$ naturally correspond to permutations of size $n$ with descent set
$S$. Our main result concerning the polytope $\mathcal{Z}_S$ is a formula
for its volume. For a composition $\gamma$ of $n$, 
let $\Kappa_\gamma$ denote the set of \emph{weak} compositions $\alpha
= (\alpha_1, \alpha_2, \dots)$ of $n$,
meaning that $\alpha$ can have parts equal to $0$, such that
$\alpha_1  + \alpha_2 +\dots + \alpha_i \geq \gamma_1 + \gamma_2 + \dots
+ \gamma_i$ for all $i$.
Define $\va(\gamma)$ to be the sequence consisting of
$\gamma_1$ $1$'s, followed by $\gamma_2$ $2$'s, followed by
$\gamma_3$ $3$'s, and so on.
Then $\alpha$ is in $\Kappa_\gamma$ if and only
if
$\alpha$ is the content of an $\va(\gamma)$-parking function.
(The \emph{content} of a parking function
is the composition whose $i$-th part is the number of
components of the parking function equal to $i$.)
In Section~\ref{chain_polytopes}
(Theorem~\ref{vol_ZS_thm}) we show that
\begin{equation}\label{vol_ZS}
n!\cdot \Vol\bigl(\mathcal{Z}_S(d_1,d_2,\dots,d_k)\bigr)=
\left|
\sum_{(b_1,\dots,b_n)\in\mathcal{P}_{\va(\comp(S))}} \prod_{i=1}^n
(-1)^{b_i} d_{b_i}
\right|
=
\end{equation}
$$
=
\left|
\sum_{\alpha \in \Kappa_{\comp(S)}} 
{n\choose \alpha}\cdot (-1)^{\alpha_1+\alpha_3+\alpha_5+\dots}\cdot
d_1^{\alpha_1} d_2^{\alpha_2} \dots d_k^{\alpha_k}
\right|,
$$
where ${n\choose \alpha} = {n!\over \alpha_1!\alpha_2!\dots\alpha_k!}$
and $k = \ell(\comp(S))$ is the number of parts of
$\comp(S)$. For
example, for $n=5$ and $S=\{4\}$, 
we have
$$
\Kappa_{\comp(S)} = \Kappa_{(1,3,1)} = 
\{(1,3,1),(1,4,0),(2,2,1),(2,3,0),(3,1,1),(3,2,0),
$$
$$
(4,0,1), (4,1,0), (5,0,0)\},
$$
so
we get from (\ref{vol_ZS}) that
$$
5!\cdot\Vol\bigl(\mathcal{Z}_{\{1\}}(d_1,d_2,d_3)\bigr)
=  20 d_1   d_2^3 d_3
-  5  d_1   d_2^4
-  30 d_1^2 d_2^2 d_3
+  10 d_1^2 d_2^3
$$
$$
+ 20  d_1^3 d_2   d_3
- 10  d_1^3 d_2^2
- 5   d_1^4       d_3
+ 5   d_1^4 d_2
-     d_1^5.
$$

Setting $d_i = q^{i-1}$ in (\ref{vol_ZS}), where we take
$q\geq 1$ so that the sequence $d_1$, $d_2$, \ldots\ is non-decreasing,
and recalling~(\ref{Igamma}) gives
$$
n!\cdot \Vol(1,q,q^2,\dots) =
\left|
\sum_{(b_1,\ldots,b_n)\in\mathcal{P}_{\va(\comp(S))}}
(-q)^{b_1+b_2+\cdots+b_n-n}
\right|
= \left|
I_{\va(\comp(S))}(-q)
\right|.
$$
Specializing further by setting $q=1$ yields the identity
$$
\left|I_{\va(\comp(S))}(-1)\right| = \beta_n(S).
$$
Observe that this identity is consistent with (\ref{Igamma_minus_one}).
Indeed, the first part of $\comp(S)$ is positive, and thus the first
element of $\va(\comp(S))=(a_1,a_2,\dots,a_n)$
is $1$, i.e.\ an odd number. Comparing the sequence $(a_1,a_2,\dots,a_n)$ with
the letters of the word $\bbb u_S$ we see that $a_{i+1}
= a_i+1$ if the corresponding letters of $u_S$ are different, and
$a_{i+1}=a_i$ otherwise; in other words, changes of parity between
consecutive elements of $(a_1,a_2,\dots,a_n)$ correspond to letter changes
in the word $\bbb u_S$. (The extra $\bbb$ in front corresponds to the first
part $1$ of $\comp(S)$.) 
For example, for $n=7$ and $S=\{2,3,4\}$, we have
$\comp(S) = (1,1,3,2)$, $\bbb u_S = \bbb\aaa\bbb^3\aaa^2$, and
$\va(\comp(S)) = (1,2,3,3,3,4,4)$.
It follows that the subset constructed from
$\va(\comp(S))$ according to the rule (\ref{Igamma_S}) of an earlier result
is $S$, so
the results agree.

Considering once more the case $S=\{2,4,6,\dots\}\cap[n-1]$, let us point
out the similarity between the formula (\ref{vol_ZS}) and the expression
that Pitman and Stanley \cite{PitmanStanley} derive for the volume of their
\emph{parking function polytope}. This polytope,
which we denote by $\Pi_n(c_1,c_2,\dots,c_n)$,
is defined by the inequalities $x_i \geq 0$ and
$$
x_1+x_2+\dots + x_i \leq c_1+c_2+\dots+c_i
$$
for all $i\in [n]$. The volume-preserving change of coordinates
$y_i = c_n+c_{n-1} + \dots + c_{n+1-i} - (x_1+x_2+\dots+x_i)$ transforms the
defining relations above into $y_i \geq 0$ for $i\in [n]$, $y_1 \leq c_1$, and
$y_i - y_{i+1} \geq c_i$ for $i\in [n-1]$, and these new relations look much
like the ones defining $\mathcal{Z}_{\{2,4,6,\dots\}}(c_1,c_2,\dots,c_n)$:
in essence we have here a difference instead of a sum. This similarity 
somewhat explains the close resemblance of the volume formulas for the two
polytopes,
as for $\Pi_n(c_1,c_2,\dots,c_n)$ we have
$$
n!\cdot \Vol(\Pi_n(c_1,c_2,\dots,c_n))
= \sum_{(b_1,\dots,b_n)\in \mathcal{P}_n} \prod_{i=1}^n c_{b_i} 
= \sum_{\alpha \in \Kappa_{1^n}} \prod_{i=1}^n {n\choose \alpha}c_i^{\alpha_i}.
$$

\section*{Acknowledgments}

We would like to thank Ira Gessel for providing useful references.

\section{An involution on $\va$-parking functions}\label{parking_functions}

The idea of the
combinatorial argument presented in this section was first discovered by
the second author and Igor Pak during their work on \cite{PakPostnikov}.

Let $\va=(a_1,a_2,
\dots,a_n)$ be a non-decreasing sequence of positive integers.
As a first step in the construction of our involution on
$\va$-parking functions, let $Y_{\va}$ be the Young diagram whose
column lengths from left to right are $a_n$, $a_{n-1}$, \dots, $a_1$.
Define a \emph{horizontal strip} $H$ inside $Y_{\va}$ to be a set of cells
of $Y_{\va}$ satisfying the following conditions:

(i) for every $i\in [n]$, the set
$H$ contains exactly one cell $\sigma_i$ from column $i$
(we number the columns $1$, $2$, \dots, $n$
from left to right);

(ii) for $i < j$, the cell $\sigma_i$ is in the same or in a lower row than
the cell $\sigma_j$.

For a horizontal strip $H$, let us call a filling of the cells of $H$ with
numbers $1$, $2$, \dots, $n$ \emph{proper} if the numbers in row $i$ are in
increasing order if $i$ is odd, or in decreasing order if $i$ is even (we
number the rows $1$, $2$, \dots, from top to bottom). Let $\mathcal{H}_{\va}$
denote the set of all properly filled horizontal strips inside $Y_{\va}$.

For an $\va$-parking function $\vb = (b_1,b_2,\dots,b_n)$, define
$H(\vb)\in\mathcal{H}_{\va}$ in the following way.
Let $I_j \subseteq [n]$ be the set
of indices $i$ such that $b_i=j$. Construct
the filled horizontal strip $H(\vb)$ by first writing the elements
of $I_1$ in \emph{increasing} order in the $|I_1|$ \emph{rightmost} columns 
in row $1$ of $Y_{\va}$, then writing the elements of $I_2$ in
\emph{decreasing} order in the next $|I_2|$ columns from the right in
row $2$,
then writing the elements of $I_3$ in \emph{increasing} order
in the next $|I_3|$ columns from the right in row $3$, and so on, alternating
between increasing and decreasing order.

\begin{lemma}\label{Hb_fits_into_Y}
A sequence
$\vb$ is an $\va$-parking function if and only if the horizontal strip
$H(\vb)$ produced in the above construction fits into $Y_{\va}$.
\end{lemma}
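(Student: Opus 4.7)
The plan is to rephrase both conditions as column-by-column inequalities and observe that they coincide. The key bookkeeping is that the $i$-th column from the right of $Y_{\va}$ (that is, column $n+1-i$ from the left) has length exactly $a_i$, so $H(\vb)$ fits inside $Y_{\va}$ if and only if, for each $i\in[n]$, the cell placed by the construction in that column sits in some row $r_i \leq a_i$.

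Next I would pin down the row $r_i$ explicitly. The recipe fills columns right-to-left in consecutive blocks: the first $|I_1|$ columns from the right go into row~$1$, the next $|I_2|$ columns into row~$2$, and so on. If $b'_1 \leq b'_2 \leq \cdots \leq b'_n$ denotes the increasing rearrangement of $\vb$, then by definition the first $|I_1|$ of the $b'_i$'s equal $1$, the next $|I_2|$ of them equal $2$, and so on; matching block by block with the row assignment above gives $r_i = b'_i$.

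Combining the two observations, $H(\vb)$ fits in $Y_{\va}$ precisely when $b'_i \leq a_i$ for every $i\in[n]$, which is the defining inequality for $\vb$ to be an $\va$-parking function. Along the way one should check that the construction actually produces a valid horizontal strip in the sense of (i) and (ii); part~(i) is obvious since the total number of filled cells is $\sum_j |I_j|=n$, and part~(ii) is automatic because rows are used in increasing order while columns are used from right to left, forcing $\text{row}(\sigma_i)$ to be non-increasing in~$i$. The only genuine risk in executing the proof is the reverse indexing between columns of $Y_{\va}$ and entries of $\va$, which is settled by the length computation in the first paragraph; once that is in hand, the rest is a direct block-by-block comparison.
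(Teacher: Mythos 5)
Your proposal is correct and matches the paper's own argument: both identify that the cell in the $i$-th column from the right lands in row $b_i'$ and that this column has length $a_i$, so fitting is equivalent to $b_i'\leq a_i$ for all $i$. The extra check that the construction yields a valid horizontal strip is a harmless addition the paper leaves implicit.
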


\begin{proof}
Let $(b_1',b_2',\ldots,b_n')$ be the increasing rearrangement of $\vb$. Then
the cell of $H(\vb)$ in the $i$-th column from the right belongs to row
$b_i'$. Thus the condition of the lemma is equivalent to $b_i' \leq a_i$ for
all $i$. 
\end{proof}

Clearly, the filling of $H(\vb)$ from the above procedure is proper, and
hence $\vb \leftrightarrow H(\vb)$ is a bijection between $\mathcal{P}_{\va}$
and $\mathcal{H}_{\va}$. We will describe our involution on
$\va$-parking functions in terms of the corresponding filled
horizontal strips.

Let $H$ be a properly filled horizontal strip in $\mathcal{H}_{\va}$.
In what follows we use $\sigma_i$ to refer
to both the cell of $H$ in column $i$ and to the number written in it.
Let $r(\sigma_i)$ be the number of the row containing $\sigma_i$.
We begin by defining the \emph{assigned direction} for each of
the $\sigma_i$'s. 
For the purpose of this definition it is convenient to imagine that
$H$ contains a cell labeled $\sigma_{n+1}=n+1$ in row $1$ and column $n+1$,
that is, just outside the first row $Y_{\va}$ on the right.
Let
$$
\epsilon_i = \sgn(\sigma_i-\sigma_{i+1})(-1)^{r(\sigma_i)},
$$
where $\sgn(x)$ equals $1$ if $x>0$, or $-1$ if $x<0$.
Define the assigned direction for $\sigma_i$ to be
\emph{up} if $\epsilon_i = -1$ and \emph{down} if $\epsilon_i=1$.

Let us say that
$\sigma_i$ is \emph{moveable down} if the assigned direction for $\sigma_i$
is down, $\sigma_i$ is not the bottom cell of column $i$, and moving 
$\sigma_i$ to the cell immediately below it would not violate the rules of
a properly filled horizontal strip. The latter condition prohibits moving
$\sigma_i$ down if there is another cell of $H$ immediately to the left of it,
or if moving $\sigma_i$ down
by one row would violate the rule for the relative order
of the numbers in row $r(\sigma_i)+1$.
Let us say that $\sigma_i$ is \emph{moveable up} if the assigned direction
for $\sigma_i$ is up. Note that we do not need any complicated conditions
in this case: if $\sigma_i$ has another cell of $H$ immediately to its right,
or if $\sigma_i$ is in the top row, then the assigned direction for
$\sigma_i$ is down.

It is a good time to consider an example. Figure \ref{hor_strip_example}
shows the diagram $Y_{\va}$ and a 
properly filled horizontal strip $H\in\mathcal{H}_{\va}$ for
$\va=(3,3,6,7,7,7,8)$.
The horizontal strip shown is $H(\vb)$, where $\vb = (5,7,2,5,1,5,2) \in
\mathcal{P}_{\va}$. Moveable cells are equipped with arrows pointing in
their assigned directions. Note that the assigned direction for
$\sigma_7 = 5$ is down
because of the ``imaginary''
$\sigma_8=8$ next to it, but it is not moveable down because the
numbers $7$, $3$, $5$ in row $2$ would not be ordered properly. The assigned
direction for $\sigma_3=4$ and $\sigma_4=6$ is also down, but these cells
are not moveable down because moving them down would not produce a horizontal
strip.

\begin{figure}[h!]
\begin{center}
\input{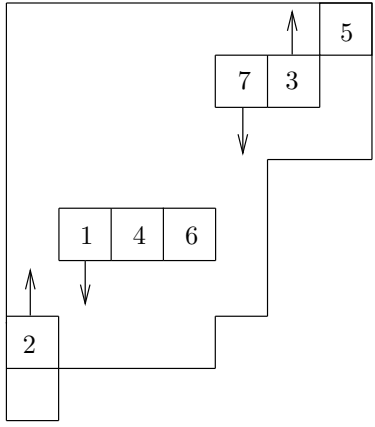}
\end{center}
\caption{A properly filled horizontal strip with assigned directions for
its cells}
\label{hor_strip_example}
\end{figure}

The validity of the involution we are about to define depends on the following
simple but crucial fact.

\begin{lemma}\label{move_back_forth}
Let $\sigma_i$ be a moveable cell of $H\in\mathcal{H}_{\va}$, and let
$H'\in\mathcal{H}_{\va}$ be the horizontal strip obtained from $H$ by
moving $\sigma_i$ by one row in its assigned direction.
Then

(a) $\sigma_i$
is moveable in the opposite direction in $H'$;

(b) if $j \neq i$ and $\sigma_j$ is moveable in $H$, then $\sigma_j$
is moveable in
the same direction in $H'$;

(c) if $\sigma_j$ is not moveable in $H$, then it is not
moveable in $H'$.
\end{lemma}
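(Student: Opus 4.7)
The plan is to treat the three parts in order, with part (b) being the most involved. For part (a) I would observe that $\epsilon_i$ depends only on the values $\sigma_i, \sigma_{i+1}$, which are intrinsic labels unaffected by moving the cell carrying $\sigma_i$, and on the parity of $r(\sigma_i)$, which flips by exactly one when the cell is moved by one row. Hence $\epsilon_i$ reverses sign in passing from $H$ to $H'$, so the assigned direction flips. For moveability in the new direction: if $\sigma_i$ was moveable down in $H$, then in $H'$ the assigned direction is up, and this alone is what is required for ``moveable up''; if $\sigma_i$ was moveable up in $H$, then moving it back down in $H'$ is precisely the reverse operation and returns us to the valid strip $H$, so all horizontal-strip and proper-filling requirements for the downward move in $H'$ are automatically met.

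For part (b), since $j \neq i$ the values $\sigma_j, \sigma_{j+1}$ and the row $r(\sigma_j)$ are all unchanged in $H'$, so the assigned direction of $\sigma_j$ agrees in $H$ and $H'$. If the direction is up, nothing more is needed. For direction down I would verify in $H'$ that $\sigma_j$ is not at the bottom of its column (clear, as $\sigma_j$ has not moved), that no cell sits immediately to the left of $\sigma_j$ at $(r(\sigma_j), j-1)$, and that the filling of row $r(\sigma_j)+1$ after inserting $\sigma_j$ is still proper. I would organize the verification by the column distance between $i$ and $j$ and by which rows $\sigma_i$'s move touches. When $i \notin \{j-1, j+1\}$ and the old and new rows of $\sigma_i$ avoid $r(\sigma_j)$ and $r(\sigma_j)+1$, the local picture around $\sigma_j$ is unchanged and we are done. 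In every delicate case the strategy is either to rule out the configuration by deriving a contradiction with the hypotheses, or to recover proper filling in $H'$ from the proper fillings of $H$ and $H'$ themselves. The prototypical obstruction is $i = j-1$ with $r(\sigma_{j-1}) = r(\sigma_j)+1$ and $\sigma_{j-1}$ moveable up: here the assigned-direction-up condition forces a value inequality between $\sigma_{j-1}$ and $\sigma_j$ that is exactly opposite to the one needed for row $r(\sigma_j)+1$ to accommodate $\sigma_j$ in $H$, so this configuration cannot coexist with $\sigma_j$ being moveable down in $H$. When $\sigma_i$ instead enters row $r(\sigma_j)+1$ from below (by moving up from row $r(\sigma_j)+2$), it arrives strictly to the left of the existing cells there, and the proper ordering of the resulting row in $H'$ combines with the proper-filling condition from $H$ to extend the order through the eventual insertion of $\sigma_j$ at column $j$.

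Part (c) then follows from (a) and (b) by contraposition: if $\sigma_j$ were moveable in $H'$, then by (a) the cell $\sigma_i$ is moveable in $H'$ in the reverse direction, and applying (b) to $H'$ with this reverse move of $\sigma_i$ yields $\sigma_j$ moveable in $H$, contradicting the hypothesis. The main obstacle is the case analysis in (b): one must be systematic about the joint position of $i, j$ and the rows touched by $\sigma_i$'s move, and check for each case that either the joint moveability of $\sigma_i$ and $\sigma_j$ is forbidden by the hypotheses or the proper filling of $H'$ already supplies what is needed for $\sigma_j$ to remain moveable.
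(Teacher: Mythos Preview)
Your proposal is correct and follows the same approach as the paper, which dispatches the lemma in a single line (``Follows by inspection of the moving rules''); you have simply carried out that inspection explicitly, and your contrapositive derivation of (c) from (a) and (b) is a clean way to organize the final check. One small point to watch in your case analysis for (b): when $i=j-1$ with $r(\sigma_{j-1})=r(\sigma_j)+2$ and $\sigma_{j-1}$ moving up, the row $r(\sigma_j)+1$ is empty in both $H$ and (except for $\sigma_{j-1}$) $H'$, so the needed inequality between $\sigma_{j-1}$ and $\sigma_j$ comes not from any existing proper filling but directly from the assigned-direction-up condition $\epsilon_{j-1}=-1$, just as in your ``prototypical obstruction'' case.
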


\begin{proof}
Follows by inspection of the moving rules.
\end{proof}

Let $\tilde{\mathcal{H}}_{\va}$
be the set of all $H\in\mathcal{H}_{\va}$ such that
$H$ has at least one moveable cell (up or down). Define the map
$\psi : \tilde{\mathcal{H}}_{\va} \rightarrow \tilde{\mathcal{H}}_{\va}$
as follows:
given $H\in\tilde{\mathcal{H}}_{\va}$, let $\psi(H)$ be the horizontal
strip obtained from $H$ by choosing the \emph{rightmost} moveable cell
of $H$ and moving it by one row in its assigned direction. In view of
Lemma \ref{move_back_forth}, $\psi$ is an involution.

For $\vb = (b_1,b_2,\dots,b_n)\in \mathcal{P}_{\va}$
and $H = H(\vb)$, define
$s(\vb) = s(H) := b_1 + b_2+\dots+b_n - n$. Observe that $s(\vb)$ is the number
of cells of $Y_{\va}$ that lie above one of the cells of $H(\vb)$. In the
example in Figure \ref{hor_strip_example}, we have $s(\vb)=s(H)=
6+4+4+4+1+1+0=20$. Clearly, $s(H) = s(\psi(H)) \pm 1$ for $H\in
\tilde{\mathcal{H}}_{\va}$. Since $\psi$ is fixed-point free,
it follows that
$$
\sum_{H\in\tilde{\mathcal{H}}_{\va}} (-1)^{s(H)} = 0,
$$
and that
\begin{equation}\label{Igamma_minus_one_remainder}
I_{\va}(-1) = \sum_{H\in\tilde{\mathcal{H}}_{\va}-\mathcal{H}_{\va}}
(-1)^{s(H)}
\end{equation}
(cf.\ (\ref{Igamma})). It remains to examine the members of
$\mathcal{H}_{\va}-\tilde{\mathcal{H}}_{\va}$ in order to evaluate the
right hand side of (\ref{Igamma_minus_one_remainder}).

\begin{lemma}\label{fixed_points}
If $a_1$ is even, then $\mathcal{H}_{\va}-\tilde{\mathcal{H}}_{\va}
= \varnothing$. If $a_1$ is odd, then 
$\mathcal{H}_{\va}-\tilde{\mathcal{H}}_{\va}$ consists of all filled
horizontal strips $H$ in $Y_{\va}$ such that 
the cell $\sigma_i$ of $H$ is at the bottom of column $i$ for all
$i\in [n]$, and
the permutation
$\sigma_n \dots \sigma_2\sigma_1$ has descent set
$$
S = \Bigl\{i\in [n-1]\ \Bigl|\ a_{i+1}\mbox{ is odd}\Bigr\}
$$
(cf.\ (\ref{Igamma_S})).
\end{lemma}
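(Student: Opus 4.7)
The fixed points of $\psi$ are exactly the members of $\mathcal{H}_{\va}\setminus\tilde{\mathcal{H}}_{\va}$, namely the $H$ with no moveable cell. Since the definition makes $\sigma_i$ moveable up exactly when $\epsilon_i=-1$, the condition ``no cell of $H$ is moveable up'' is just $\epsilon_i=1$ for every $i\in[n]$. My plan is to impose this as a standing hypothesis and deduce that ``no cell of $H$ is moveable down'' then forces every $\sigma_i$ to sit at the bottom of column $i$; once that rigid shape is known, the constraint $\epsilon_n=1$ produces the required parity of $a_1$, and the remaining $\epsilon_i=1$ for $i\in[n-1]$ translate into the assertion about the descent set of $\sigma_n\sigma_{n-1}\cdots\sigma_1$.

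The heart of the argument is the shape rigidity. Assume for contradiction that some cell is not at the bottom of its column and take $i$ to be the rightmost such index. Set $r=r(\sigma_i)$ and let $\sigma_j,\sigma_{j+1},\dots,\sigma_m$ be the maximal run of cells of $H$ in row $r$ that contains $\sigma_i$ (so $j=1$ or $r(\sigma_{j-1})>r$; this uses the fact that rows of a horizontal strip form a non-increasing sequence as the column index grows, hence cells of a single row occupy an interval of columns). I will show that $\sigma_j$ is moveable down, a contradiction. Obstruction~(a) fails: since $j\leq i$ and $\va$ is non-decreasing, column $j$ has length $a_{n+1-j}\geq a_{n+1-i}>r$, so $\sigma_j$ is not at the bottom. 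Obstruction~(b) fails: by minimality of $j$, no cell of $H$ sits immediately to the left of $\sigma_j$. For~(c): after the move, $\sigma_j$ becomes the rightmost cell in row $r+1$, and either that row was previously empty (nothing to check) or its previous rightmost cell was $\sigma_{j-1}$ with $r(\sigma_{j-1})=r+1$; in the latter case the hypothesis $\epsilon_{j-1}=1$ gives $\sgn(\sigma_{j-1}-\sigma_j)=(-1)^{r+1}$, which is precisely the sign required by the parity rule for a proper filling of row $r+1$.

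With every $\sigma_i$ at the bottom, $r(\sigma_i)=a_{n+1-i}$. Since $\sigma_{n+1}=n+1>\sigma_n$, the equation $\epsilon_n=1$ becomes $(-1)^{a_1+1}=1$, so $a_1$ must be odd; if $a_1$ is even the set $\mathcal{H}_{\va}\setminus\tilde{\mathcal{H}}_{\va}$ is therefore empty. When $a_1$ is odd, the remaining conditions $\epsilon_i=1$ for $i\in[n-1]$ say that $\sigma_i<\sigma_{i+1}$ if and only if $a_{n+1-i}$ is odd, and the substitution $i'=n-i$ recasts this as the statement that the descent set of the permutation $\pi=\sigma_n\sigma_{n-1}\cdots\sigma_1$ equals $S$. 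The converse direction --- that every such $H$ is in fact a fixed point --- is immediate: cells at the bottom cannot be moved down by (a), and $\epsilon_i=1$ everywhere forbids any upward move.

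The principal obstacle is the parity matching inside condition~(c) of the main step, since it is the only place where the standing hypothesis $\epsilon_{j-1}=1$, the ordering convention, and the parity of row $r+1$ must all interlock perfectly; everything else is routine bookkeeping with the non-decreasing nature of $\va$ and the interval-of-columns structure of each row in a horizontal strip.
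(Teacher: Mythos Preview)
Your argument is correct and follows essentially the same route as the paper's proof: reduce ``no moveable cell'' to ``all $\epsilon_i=1$'', then show that a cell not at the bottom of its column with no same-row left neighbor would be moveable down (using $\epsilon_{j-1}=1$ to handle the ordering in row $r+1$), and finally read off the parity of $a_1$ and the descent set from the bottom-row positions. The only cosmetic differences are that the paper picks the \emph{leftmost} cell not at the bottom directly (rather than passing through a rightmost $i$ and then the left end $j$ of its row run---your choice of ``rightmost'' $i$ is in fact never used), and the paper disposes of the even-$a_1$ case up front by looking at $\sigma_n$ rather than deducing it afterward from $\epsilon_n=1$.
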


\begin{proof}
Let $H\in\mathcal{H}_{\va}$, and consider the cell
$\sigma_n$ in $H$.
Since $\sigma_n < \sigma_{n+1}=n+1$, it follows that $\sigma_n$ is moveable
up if $r(\sigma_n)$ is even, or moveable down if $r(\sigma_n)$ is odd, unless
in the latter case $\sigma_n$ is at the bottom of column $n$. Thus if
$a_1$ is even, that is, if the rightmost column of $Y_{\va}$ has even
height, $\sigma_n$ is always moveable and
$\mathcal{H}_{\va}=\tilde{\mathcal{H}}_{\va}$.

Suppose that $a_1$ is odd, and let
$H\in\mathcal{H}_{\va}-\tilde{\mathcal{H}}_{\va}$.
Then no cells of $H$ are moveable, and hence the assigned direction for
every cell is down.

First, let us show that all cells of $H$ are at the bottom of their respective
columns. Suppose it is not so, and choose the \emph{leftmost}
cell $\sigma_i$ of $H$ such that
the cell immediately below it is in $Y_{\va}$. Our choice guarantees that
$\sigma_i$ does not have another cell of $H$ immediately to its left, so the
only way $\sigma_i$ can be not moveable down is if $\sigma_{i-1}$ is in
column $i-1$ one row below $\sigma_i$ and $\sgn(\sigma_{i-1}-\sigma_i)
= -(-1)^{r(\sigma_{i-1})}$.
But in this case
the assigned direction for $\sigma_{i-1}$ is up --- a contradiction.

Now let us compute the descent set of $\sigma_n\sigma_{n-1}\dots \sigma_1$.
We just proved that $r(\sigma_{n+1-i}) = a_{i}$ for all $i\in [n]$.
For $i\in [n-1]$, we have
$$
1 = \epsilon_{n-i} = 
\sgn(\sigma_{n-i} - \sigma_{n+1-i})(-1)^{r(\sigma_{n-i})},
$$
and hence $\sigma_{n+1-i} > \sigma_{n-i}$, that is, 
$\sigma_n\sigma_{n-1}\cdots\sigma_1$ has a descent in position $i$,
if and only if $r(\sigma_{n-i}) = a_{i+1}$ is odd. 
\end{proof}

Note that from Lemma \ref{fixed_points} it follows that for all
$\mathcal{H}_{\va}-\tilde{\mathcal{H}}_{\va}$, the value
of $s(H)$ is the same, namely,
$a_1 + a_2 + \cdots + a_n - n$.
Combining with (\ref{Igamma_minus_one_remainder}),
we obtain the following theorem (cf.\ (\ref{Igamma_minus_one})).

\begin{theorem}[cf.\ \cite{PakPostnikov}]\label{Igamma_minus_one_thm}
For a non-decreasing
sequence $\va = (a_1,a_2,\dots,a_n)$ of positive integers, we have
$$
I_{\va}(-1) = \left\{
\begin{array}{ll}
0,& \mbox{if $a_1$ is even;}\\
(-1)^{a_1+\cdots+a_n-n}\cdot\beta_n(S),& \mbox{if $a_1$ is odd,}
\end{array}
\right.
$$
where $S = \Bigl\{i\in [n-1]\ \Bigl|\ a_{i+1}\mbox{ is odd}\Bigr\}$,
and $\beta_n(S)$ is the number of permutations of $[n]$ with descent set $S$.
\end{theorem}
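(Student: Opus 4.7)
The plan is to read the theorem directly off of equation (\ref{Igamma_minus_one_remainder}) and Lemma~\ref{fixed_points}. All the substantive work has already been done: the involution $\psi$ pairs up elements of $\tilde{\mathcal{H}}_{\va}$ and flips the parity of $s(H)$ with each move, so the signed sum defining $I_{\va}(-1)$ cancels on $\tilde{\mathcal{H}}_{\va}$ and reduces to a signed sum over the fixed-point set $\mathcal{H}_{\va} - \tilde{\mathcal{H}}_{\va}$.

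For the case $a_1$ even, Lemma~\ref{fixed_points} says the fixed-point set is empty, and so $I_{\va}(-1) = 0$ with nothing further to prove. For $a_1$ odd I would first observe that $s(H)$ takes a single value on $\mathcal{H}_{\va}-\tilde{\mathcal{H}}_{\va}$. Indeed, by Lemma~\ref{fixed_points} every fixed point $H$ has $\sigma_i$ sitting at the bottom of column $i$, and since column $i$ (counted from the left) has height $a_{n+1-i}$, the row index satisfies $r(\sigma_i) = a_{n+1-i}$. Summing the number of empty cells above each $\sigma_i$ gives
$$
s(H) \;=\; \sum_{i=1}^n (r(\sigma_i)-1) \;=\; \sum_{i=1}^n (a_{n+1-i}-1) \;=\; a_1+a_2+\cdots+a_n-n,
$$
which is independent of $H$. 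Factoring this constant sign out of (\ref{Igamma_minus_one_remainder}) yields
$$
I_{\va}(-1) \;=\; (-1)^{a_1+\cdots+a_n-n}\cdot \bigl|\mathcal{H}_{\va}-\tilde{\mathcal{H}}_{\va}\bigr|.
$$

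The final step is to identify this cardinality with $\beta_n(S)$. This is immediate from the second clause of Lemma~\ref{fixed_points}: once the positions of the cells are fixed at the bottoms of the columns, the assignment $H \mapsto \sigma_n \sigma_{n-1}\cdots \sigma_1$ is a bijection from the fixed-point set onto the set of permutations of $[n]$ with descent set $S$. I do not anticipate any real obstacle, since the hard combinatorial content lives in the design of the involution $\psi$ and in the characterization of its fixed points; the theorem itself is just the bookkeeping that stitches (\ref{Igamma_minus_one_remainder}) and Lemma~\ref{fixed_points} together.
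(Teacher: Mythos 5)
Your proposal is correct and follows exactly the paper's own route: the paper likewise deduces the theorem by combining the cancellation identity (\ref{Igamma_minus_one_remainder}) with Lemma~\ref{fixed_points}, noting that $s(H)=a_1+\cdots+a_n-n$ on every fixed point and that the fixed points biject with permutations having descent set $S$. No differences worth noting.
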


\section{Generalized chain polytopes of ribbon posets}
\label{chain_polytopes}

In this section we prove the formula (\ref{vol_ZS}) of Section \ref{intro}:

\begin{theorem}\label{vol_ZS_thm}
For a positive integer $n$, a subset $S\subseteq [n-1]$ such that
$$\comp(S) = (1,\delta_1, \delta_2, \ldots, \delta_{k-1}),$$
and a sequence $0 < d_1 \leq d_2 \leq \cdots \leq d_k$ of real numbers,
we have
\begin{eqnarray}\label{vol_ZS_2}
\nonumber
n!\cdot \Vol\bigl(\mathcal{Z}_S(d_1,d_2,\dots,d_k)\bigr)&=&
(-1)^{1+\delta_2+\delta_4+\cdots}
\sum_{(b_1,\dots,b_n)\in\mathcal{P}_{\va(\comp(S))}} \prod_{i=1}^n
(-1)^{b_i} d_{b_i}
\\
&=&
(-1)^{1+\delta_2+\delta_4+\cdots}
\sum_{\alpha \in \Kappa_{\comp(S)}} 
{n\choose \alpha}\cdot (-1)^{\alpha_1+\alpha_3+\alpha_5+\dots}\cdot
d_1^{\alpha_1} 
\cdots d_k^{\alpha_k}.
\end{eqnarray}
\end{theorem}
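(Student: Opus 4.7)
The plan is to prove the formula by induction on $k$, the number of parts of $\comp(S)$. The base case $k=1$ forces $n=1$ and $\mathcal{Z}_S(d_1) = [0,d_1]$, giving $1!\cdot d_1 = (-1)^{1}\cdot((-1)^{1} d_1) = d_1$, which checks.

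For the inductive step with $k \geq 2$, let $m = 1+\delta_1+\cdots+\delta_{k-2}$ be the hinge variable position between the last two blocks. The variables $x_{m+1},\ldots,x_n$ appear only in the last block constraint $x_m+x_{m+1}+\cdots+x_n \leq d_k$, so integrating them out over a simplex of side $d_k - x_m$ yields
$$
\Vol(\mathcal{Z}_S(d_1,\ldots,d_k)) \;=\; \int_{\mathcal{Z}_{S'}(d_1,\ldots,d_{k-1})} \frac{(d_k-x_m)^{\delta_{k-1}}}{\delta_{k-1}!}\,dx,
$$
where $\mathcal{Z}_{S'}$ has composition $(1,\delta_1,\ldots,\delta_{k-2})$. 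Expanding via the binomial theorem reduces the computation to evaluating the moments $M_j := \int_{\mathcal{Z}_{S'}} x_m^j/j!\,dx$ for $j=0,1,\ldots,\delta_{k-1}$.

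I would convert each $M_j$ to a volume via $x_m^j/j! = \Vol\{(u_1,\ldots,u_j)\colon u_i \geq 0,\ \sum u_i \leq x_m\}$ and the volume-preserving substitution $x_m = v+u_1+\cdots+u_j$ with $v \geq 0$. For $k\geq 3$ this realizes $M_j$ as the volume of a polytope of the same type $\mathcal{Z}_{S'_j}(d_1,\ldots,d_{k-1})$, obtained by inflating the last block of $\mathcal{Z}_{S'}$ by $j$ extra variables, with composition $\gamma_j = (1,\delta_1,\ldots,\delta_{k-3},\delta_{k-2}+j)$; the edge case $k=2$ produces a single $(j+1)$-simplex of volume $d_1^{j+1}/(j+1)!$, which I would verify directly against the formula. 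The inductive hypothesis supplies a parking function expression for each $M_j$, and the identity $\binom{n}{\delta_{k-1}-j}\binom{m+j}{\alpha'}=\binom{n}{\alpha}$ with $\alpha_k = \delta_{k-1}-j$ recovers the target multinomial coefficient $\binom{n}{\alpha}$. The condition $\alpha \in \Kappa_{\comp(S)}$ matches precisely the requirement $\alpha'=(\alpha_1,\ldots,\alpha_{k-1}) \in \Kappa_{\gamma_j}$ together with $\alpha_k \in [0,\delta_{k-1}]$, by direct comparison of the dominance inequalities.

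The main obstacle I expect is the sign bookkeeping. The factor $(-1)^j$ from the binomial expansion, the inductive outer sign $(-1)^{1+\delta_2+\delta_4+\cdots}$ for $\gamma_j$, and the inductive inner sign $(-1)^{\alpha'_1+\alpha'_3+\cdots}$ must combine to reproduce the outer sign $(-1)^{1+\delta_2+\delta_4+\cdots}$ and inner sign $(-1)^{\alpha_1+\alpha_3+\cdots}$ of the original composition. The parity analysis splits into cases depending on whether $k$ is even or odd, since augmenting $\delta_{k-2}$ by $j$ either shifts or preserves the set of even-indexed entries of the composition. In either case, using $j \equiv \delta_{k-1}-\alpha_k \pmod 2$, an even quantity (a factor of $2\delta_{k-1}$ or $2\alpha_k$) drops out modulo $2$, so the signs agree and the induction closes.
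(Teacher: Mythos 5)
Your argument is correct, and it takes a genuinely different route from the paper's, although both proofs begin by integrating out the last block of variables. The paper writes the volume as a single iterated integral and proves, by downward induction on the block index $i$, an explicit closed form for the partial integral $J_{\rho_i+1}$ as a polynomial in the hinge variable $x_{\rho_i}$ and $d_{i+1},\dots,d_k$ (Lemma \ref{inside_integrals}), using the simplex-moment identity $\int_0^a\cdots\int_0^{a-y_r-\cdots-y_2}y_1^s\,dy_1\cdots dy_r=s!\,a^{r+s}/(r+s)!$; the theorem is then the case $i=1$ after one final integration. You instead induct on the theorem statement itself: after producing the factor $(d_k-x_m)^{\delta_{k-1}}/\delta_{k-1}!$ and expanding binomially, you convert each moment $\int_{\mathcal{Z}_{S'}}x_m^j/j!$ back into the volume of a polytope of the same family via the unimodular substitution $x_m=v+u_1+\cdots+u_j$, so that the inductive hypothesis applies verbatim. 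This buys a self-similar induction that never needs a statement more refined than the theorem, at the price of the moment-to-volume device; the sign bookkeeping is essentially the same burden in both proofs, and your parity analysis (case split on the parity of $k$, with $j\equiv\delta_{k-1}-\alpha_k\pmod 2$) does close correctly in both cases, as does your matching of the dominance conditions, since $\alpha\in\Kappa_{\comp(S)}$ forces $\alpha_k\le\delta_{k-1}$ and the $(k-1)$-st partial-sum inequality becomes automatic. Two points you leave implicit but should record: the hypothesis $d_1\le\cdots\le d_k$ is exactly what guarantees $d_k-x_m\ge 0$ throughout $\mathcal{Z}_{S'}$, so the last block really is a full simplex of side $d_k-x_m$ (the paper makes the analogous remark about its integration limits); and the equality of the parking-function sum with the multinomial sum in the statement still requires the one-line grouping-by-content observation with which the paper's proof opens.
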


\begin{proof}
First, note that the
expression in the right hand side of (\ref{vol_ZS_2}) is obtained
from the middle one by grouping together the terms corresponding
to all ${n\choose \alpha}$ $\va$-parking functions of content $\alpha$;
each of these terms equals 
$$
(-1)^{\alpha_1+2\alpha_2+3\alpha_3+\cdots}\cdot d_1^{\alpha_1}
\cdots d_k^{\alpha_k}
= (-1)^{\alpha_1+\alpha_3+\alpha_5+\cdots}\cdot d_1^{\alpha_1}
\cdots d_k^{\alpha_k}.
$$

In what follows we prove the equality between the left and the right hand sides of
(\ref{vol_ZS_2}).
For $i\in [k]$, let $\rho_i = 1+\delta_1+\delta_2+\cdots+\delta_{i-1}$.
The volume of $\mathcal{Z}_S(d_1,d_2,\ldots,d_k)$ can be expressed as
the following iterated integral:
\begin{equation}\label{integrals}
\int_0^{d_1}
\int_0^{d_2 - x_{1}}
\int_0^{d_2 - x_{1} - x_{2}}
\dots
\int_0^{d_2 - x_{1} - x_{2} - \cdots - x_{\rho_2-1}}
\end{equation}
$$
\int_0^{d_{3}-x_{\rho_{2}}}
\int_0^{d_{3}-x_{\rho_{2}}-x_{\rho_{2} + 1}}
\dots
\int_0^{d_{3}-x_{\rho_{2}}-x_{\rho_{2} + 1}-\cdots-x_{\rho_{3}-1}}
$$
$$
\dots
$$
$$
\int_0^{d_{k}-x_{\rho_{k-1}}}
\int_0^{d_{k}-x_{\rho_{k-1}}-x_{\rho_{k-1}+1}}
\dots
\int_0^{d_{k}-x_{\rho_{k-1}}-x_{\rho_{k-1}+1}-\cdots-x_{\rho_k-1}}
dx_n\ dx_{n-1}\ \cdots\ dx_1
$$
(Similar integral formulas appear in \cite{KungYan}
and in \cite[Sec.\ 18]{PostnikovStanley}.)
Note that the assumption $d_1\leq d_2 \leq \cdots \leq d_k$ validates
the upper limits of those integrals taken with respect to
variables $x_2$,
$x_{\rho_2+1}$, $x_{\rho_3+1}$, \dots, $x_{\rho_{k-1}+1}$:
for $2\leq i\leq k-1$, the condition
$$
x_{\rho_i+1} \leq d_{i} - x_{\rho_{i-1}+1} - x_{\rho_{i-1}+2} -
\cdots - x_{\rho_i}
$$
implies that
$$
d_{i+1} - x_{\rho_i+1} \geq 0,
$$
and $x_1 \leq d_1$ implies $d_2 - x_1 \geq 0$.

For $\ell\in [n]$, let $J_\ell$ denote the evaluation of the
$n+1-\ell$ inside integrals of (\ref{integrals}), that is, the
integrals with respect to the variables $x_n$, $x_{n-1}$, \ldots,
$x_{\ell}$. 

\begin{lemma}\label{inside_integrals}
For $i\in [k]$, we have
\begin{eqnarray}\label{int_lemma}
\nonumber
J_{\rho_i+1}
&=&
(-1)^{\delta_{i+1}+\delta_{i+3}+\delta_{i+5}+\cdots}
\\
&&
\ \ \ \ \ \ \ \cdot\sum_{\alpha\in\Kappa_{(0,\delta_i,\delta_{i+1},\ldots,\delta_{k-1})}}
(-1)^{\alpha_1+\alpha_3+\alpha_5+\cdots}
\cdot{1\over \alpha_1!\ \alpha_2!\ \cdots}\cdot 
x_{\rho_i}^{\alpha_1} d_{i+1}^{\alpha_2} d_{i+2}^{\alpha_3} \cdots. 
\end{eqnarray}
\end{lemma}

\begin{proof}
We prove the lemma by induction on $i$, starting with the trivial base
case of $i=k$, in which we have $J_{\rho_k+1} = J_{n+1} = 1$. Now suppose
the claim is true for some $i$. By straightforward iterated integration one
can show that for non-negative integers $r$ and $s$,
\begin{equation}\label{int_identity}
\int_0^a \int_0^{a-y_{r}} \int_0^{a-y_{r}-y_{r-1}} \dots
\int_0^{a-y_{r}-y_{r-1}-\cdots-y_2}
y_1^s\ dy_1\ \cdots\ d_{y_{r-1}}\ d_{y_{r}}
= {s!\ a^{r+s}\over (r+s)!}.
\end{equation}
Using (\ref{int_identity}) to 
integrate the term of
(\ref{int_lemma})
corresponding to a particular $\alpha \in
\Kappa_{(0,\delta_i,\delta_{i+1},\ldots,\delta_{k-1})}$, we get
\begin{eqnarray}\label{int_one_term}
\nonumber
&&
\int_0^{d_i-x_{\rho_{i-1}}}
\int_0^{d_i-x_{\rho_{i-1}}-x_{\rho_{i-1}+1}}
\dots
\int_0^{d_i-x_{\rho_{i-1}}-x_{\rho_{i-1}+1}-\cdots-x_{\rho_i-1}}\\
\nonumber
&&\ \ \ \ 
(-1)^{\alpha_1+\alpha_3+\cdots}
\cdot {1\over \alpha_1!\ \alpha_2!\ \cdots}\cdot
x_{\rho_i}^{\alpha_1} d_{i+1}^{\alpha_2} d_{i+2}^{\alpha_3} \cdots\ 
dx_{\rho_i}\ \cdots\ dx_{\rho_{i-1}+1}\\[6pt]
\nonumber
&&
= (-1)^{\alpha_1+\alpha_3+\cdots}
\cdot {1\over \alpha_1!\ \alpha_2!\ \cdots} \cdot
{\alpha_1!\ (d_i-x_{\rho_{i-1}})^{\delta_{i-1}+\alpha_1}\over
(\delta_{i-1}+\alpha_1)!} \cdot
d_{i+1}^{\alpha_2} d_{i+2}^{\alpha_3} \cdots\\[6pt]
\nonumber
&&
= (-1)^{\alpha_1+\alpha_3+\cdots}\cdot 
\sum_{j,m\geq 0\ :\atop j+m=\delta_{i-1}+\alpha_1}
{1\over \alpha_2!\ \alpha_3!\cdots}
\cdot
(-1)^j \cdot
{x_{\rho_{i-1}}^j d_i^{m}\over
 j!\ m!}\cdot
d_{i+1}^{\alpha_2} d_{i+2}^{\alpha_3} \cdots\\[6pt]
&&
= 
\sum_{{{j,m\ :\ j+m=\delta_{i-1}+\alpha_1,\atop
{\ \ \ \ \ \ (j,m,\alpha_2,\alpha_3,\ldots)\in\atop
\ \ \ \ \ \ \ \ \ \Kappa_{(0,\delta_{i-1},\delta_i,\ldots)}}}}}
(-1)^{j+\alpha_1+\alpha_3+\cdots}\cdot 
{1\over j!\ m!\ \alpha_2!\ \alpha_3!\ \cdots} \cdot
x_{\rho_{i-1}}^j d_i^m d_{i+1}^{\alpha_2} d_{i+2}^{\alpha_3} \cdots
\end{eqnarray}
Observe that $(j,m,\alpha_2,\alpha_3,\ldots)\in
\Kappa_{(0,\delta_{i-1},\delta_i,\ldots)}$ if and only if
$(\alpha_1,\alpha_2,\ldots)\in\Kappa_{(0,\delta_i,\delta_{i+1},\ldots)}$,
where $\alpha_1 = j+m-\delta_{i-1}$. Hence summing the above equation
over all $\alpha\in\Kappa_{(0,\delta_i,\delta_{i+1},\ldots)}$, we get
\begin{eqnarray*}\label{int_sum}
&&
J_{\rho_{i-1}+1} \\[6pt]
&&
=
\int_0^{d_i-x_{\rho_{i-1}}}
\int_0^{d_i-x_{\rho_{i-1}}-x_{\rho_{i-1}+1}}
\dots
\int_0^{d_i-x_{\rho_{i-1}}-x_{\rho_{i-1}+1}-\cdots-x_{\rho_i-1}}
J_{\rho_i+1}\ 
dx_{\rho_i}\ \cdots\ dx_{\rho_{i-1}+1}
\\[6pt]
&&
= (-1)^{\delta_{i}+\delta_{i+2}+\delta_{i+4}+\cdots}\\
&&
\ \ \ \ \cdot
\sum_{(j,m,\alpha_2,\alpha_3,\ldots)\atop
\ \ \ \ \in\Kappa_{(0,\delta_{i-1},\delta_i,\ldots)}}
(-1)^{j+\alpha_2+\alpha_4+\cdots}\cdot 
{1\over j!\ m!\ \alpha_2!\ \alpha_3!\ \cdots} \cdot
x_{\rho_{i-1}}^j d_i^m d_{i+1}^{\alpha_2} d_{i+2}^{\alpha_3} \cdots .
\end{eqnarray*}
Note that the signs are consistent: taking into account the factor
$(-1)^{\delta_{i+1}+\delta_{i+3}+\delta_{i+5}+\cdots}$ omitted from
(\ref{int_one_term}), the total sign of a term of
(\ref{int_one_term}) is
$$
(-1)^{\delta_{i+1}+\delta_{i+3}+\delta_{i+5}+\cdots}\cdot
(-1)^{j+\alpha_1+\alpha_3+\cdots}\\
=
(-1)^{\delta_{i}+\delta_{i+2}+\delta_{i+4}+\cdots}\cdot
(-1)^{j+\alpha_2+\alpha_4+\cdots},
$$
which is true because 
$$
\alpha_1+\alpha_2+\cdots = \delta_i+\delta_{i+1}+\cdots = n-\rho_i,
$$
and hence
all the exponents on both sides add up to
$2(n-\rho_{i})+2j$, i.e.\ an even number.
\end{proof}

To finish the proof of Theorem~\ref{vol_ZS_thm}, set $i=1$ in
Lemma~\ref{inside_integrals} and integrate with respect to $x_n$:
\begin{eqnarray*}
&&
n!\cdot J_n\\[6pt]
&&
= n!\int_0^{d_1} J_1\ dx_1\\[6pt]
&&
= (-1)^{\delta_1+\delta_3+\delta_5+\cdots}\ n!\\
&&\ \ \ \ 
\cdot
\sum_{\alpha\in\Kappa_{(0,\delta_1,\delta_2,\ldots)}}
(-1)^{\alpha_2+\alpha_4+\alpha_6+\cdots}\cdot
{1\over (\alpha_1+1)!\ \alpha_2!\ \alpha_3!\ \cdots}\cdot
d_1^{\alpha_1+1} d_2^{\alpha_2} d_3^{\alpha_3} \cdots \\[6pt]
&&=
(-1)^{\delta_1+\delta_3+\cdots}
\sum_{(\alpha_1+1,\alpha_2,\alpha_3,\ldots)\atop
\ \ \ \ \in\Kappa_{(1,\delta_1,\delta_2,\ldots)}}
(-1)^{\alpha_2+\alpha_4+\cdots}
{n\choose \alpha_1+1,\ \alpha_2,\ \alpha_3,\ldots}\cdot
d_1^{\alpha_1+1}d_2^{\alpha_2}d_3^{\alpha_3}\cdots,
\end{eqnarray*}
and it is clear that $(\alpha_1+1,\alpha_2,\alpha_3,\ldots)
\in \Kappa_{(1,\delta_1,\delta_2,\ldots)}$ if and only if
$\alpha \in \Kappa_{(0,\delta_1,\delta_2,\ldots)}$.
\end{proof}

\end{document}